\newtheorem{theorem}{Theorem}[section]
\theoremstyle{definition}
\theoremstyle{remark}
\newtheorem{remark}[theorem]{Remark}
\numberwithin{equation}{section}
\begin{document}


\title[Asymptotic Homotopical Complexity]{Asymptotic Homotopical Complexity of an Infinite Sequence of Dispersing $2D$ Billiards}


\author{Nandor Simanyi}
\address{1402 10th Avenue South \\
Birmingham AL 35294-1241}
\email{simanyi@uab.edu}


\subjclass[2010]{37D05}

\date{\today}

\begin{abstract}
  We investigate the large scale chaotic, topological structure of the
  trajectories of an infinite sequence of dispersing, hence ergodic,
  $2D$ billiards with the configuration space $Q_n=\mathbb{T}^2
  \setminus \bigcup_{i=0}^{n-1} D_i$, where the scatterers $D_i$
  ($i=0,1,\dots,n-1$) are disks of radius $r<<1$ centered at the
  points $(i/n, 0)$ mod $\mathbb{Z}^2$. We get effective lower and
  upper radial bounds for the rotation set $R$. Furthermore, we also
  prove the compactness of the admissible rotation set $AR$ and the
  fact that the rotation vectors $v$ corresponding to admissible periodic orbits
  form a dense subset of $AR$. We also obtain asymptotic lower and
  upper estimates for the sequence $h_{top}(n)$ of topological
  entropies and precise asymptotic formulas for the metric entropies
  $h_{\mu}(n,r)$.
\end{abstract}

\maketitle

\section{Introduction}
The concept of rotation number finds its origin in the study of the
average rotation around the circle $S^1$ per iteration, as classically
defined by H. Poincar\'e in the 1880's \cite{P(1952)}, when one
iterates an orientation-preserving circle homeomorphism $f:S^1
\rightarrow S^1$. This is equivalent to studying the average
displacement $(1/n)(F^n(x)-x)$ ($x \in \mathbb{R}$) for the iterates
$F^n$ of a lifting $F:\mathbb{R} \rightarrow \mathbb{R}$ of $f$ on the
universal covering space $\mathbb{R}$ of $S^1$. The study of fine
homotopical properties of geodesic lines on negatively curved, closed
surfaces goes back at least to Morse \cite{M(1924)}. As far as we
know, the first appearance of the concept of homological rotation
vectors (associated with flows on manifolds) was the paper of
Schwartzman \cite{Sch(1957)}, see also Boyland \cite{B(2000)} for
further references and a good survey of homotopical invariants
associated with geodesic flows.  Following an analogous pattern, in
\cite{BMS(2006)} we defined the (still commutative) rotation numbers
of a $2D$ billiard flow on the billiard table $\mathbb{T}^2 =
\mathbb{R}^2/\mathbb{Z}^2$ with one convex obstacle (scatterer)
$\mathbf{O}$ removed. Thus, the billiard table (configuration space)
of the model in \cite{BMS(2006)} was $\mathbf{Q} =
\mathbb{T}^2\setminus\mathbf{O}$.  Technically speaking, we considered
trajectory segments $\{x(t) | 0 \le t \le T\} \subset \mathbf{Q}$ of
the billiard flow, lifted them to the universal covering space
$\mathbb{R}^2$ of $ \mathbb{T}^2$ (not of the configuration space
$\mathbf{Q}$), and then systematically studied the rotation vectors as
limiting vectors of the average displacement
$(1/T)(\tilde{x}(T)-\tilde{x}(0)) \in \mathbb{R}^2$ of the lifted
orbit segments $\{\tilde{x}(t)|0 \le t \le T\}$ as $T \rightarrow
\infty$. These rotation vectors are still ``commutative'', for they
belong to the vector space $\mathbb{R}^2$.

Despite all the advantages of the homological (or ``commutative'')
rotation vectors (i. e. that they belong to a real vector space, and
this provides us with useful tools to construct actual trajectories
with prescribed rotational behaviour), in our current view the
``right'' lifting of the trajectory segments $\{x(t)|0 \le t \le T\}
\subset \mathbf{Q_n}$ is to lift these segments to the universal
covering space of $\mathbf{Q_n}$, not of $\mathbb{T}^2$. This, in turn,
causes a profound difference in the nature of the arising rotation
``numbers'', primarily because the fundamental group
$\pi_1(\mathbf{Q_n})$ of the configuration space $\mathbf{Q_n}$ is a
highly complex hyperbolic group.

After a bounded modification, trajectory segments $\{x(t)| 0 \le t \le T\} \subset
\mathbf{Q_n}$ give rise to closed loops $\gamma_T$ in $\mathbf{Q_n}$, thus
defining an element $g_T = [\gamma_T]$ in the fundamental group
$\pi_1(\mathbf{Q_n})$. The limiting behavior of $g_T$ as
$T \rightarrow \infty$ will be investigated, quite naturally, from two
viewpoints:

\begin{enumerate}
   \item The direction ``$e$'' is to be determined, in which the
     element $g_T$ escapes to infinity in the hyperbolic group
     $\pi_1(\mathbf{Q_n})$ or, equivalently, in its Cayley graph
     $\mathbf{G}$. All possible directions $e$ form the
     horizon or the so called ideal boundary
     $\text{Ends}(\pi_1(\mathbf{Q_n}))$ of the group $\pi_1(\mathbf{Q_n})$,
     see \cite{CP(1993)}.

   \item The average speed $s = \lim_{T \rightarrow \infty}
   (1/T)\text{dist}(g_T, 1)$ is to be determined, at which the element
   $g_T$ escapes to infinity, as $T \rightarrow \infty$. 
   These limits (or limits $\lim_{T_n \rightarrow \infty} 
   (1/T_n)\text{dist}(g_{T_n}, 1)$ for sequences of positive reals $T_n \nearrow \infty$) are nonnegative real numbers.
\end{enumerate}

The natural habitat for the two limiting data $(s,e)$ is the infinite cone

\begin{displaymath}
C=([0, \infty) \times \text{Ends}(\pi_1(\mathbf{Q_n}))/(\{0\} \times \text{Ends}(\pi_1(\mathbf{Q_n}))
\end{displaymath}
erected upon the set $\text{Ends}(\pi_1(\mathbf{Q_n}))$, the latter supplied
with the usual Cantor space topology. Since the homotopical ``rotation
vectors'' $(s,e) \in C$ (and the corresponding homotopical rotation
sets) are defined in terms of the non-commutative fundamental group
$\pi_1(\mathbf{Q_n})$, these notions will be justifiably
called homotopical or noncommutative rotation numbers and sets.

The rotation set arising from trajectories obtained by the arc-length
minimizing variational method will be the so called admissible
homotopical rotation set $AR \subset C$. The homotopical rotation set
$R$ defined without the restriction of admissibility will be denoted
by $R$. Plainly, $AR \subset R$ and these sets are closed subsets of
the cone $C$.

\bigskip

In this paper we study the large scale chaotic topological (actually,
homotopic) structure of the trajectories of an infinite sequence of
dispersing, hence ergodic, $2D$ billiards with a configuration space
$Q_n=\mathbb{R}^2/\mathbb{Z}^2\setminus\cup_{i=0}^{n-1} D_i =
\mathbb{T}^2\setminus\cup_{i=0}^{n-1} D_i$, where the scatterers $D_i$
($i=0, 1, \dots, n-1$) are disks of radius $r<<1$ centered at $(i/n,
0)$ modulo $\mathbb{Z}^2$. In Theorems 2.1 and 2.2 we provide lower and upper radial
estimates for the homotopical rotation set of the system. Namely, in Theorem 2.1
we prove
\[
R\subset B(0, 2\sqrt{2}),
\]
whereas Theorem 2.2 states that
\[
B\left(0, \frac{1}{\sqrt{5}}-\mathcal{O}(\frac{1}{n})\right)\subset AR.
\]

Furthermore, in Theorems 3.1 and 3.2 we also prove the compactness
and the convexity of the admissible rotation
set $AR$ and the property that the set of rotation vectors $v$
corresponding to periodic admissible orbits form a dense subset of
$AR$.  For all of the above notions, please see the first three pages
(up to the formulation of Theorem 2.2) of \cite{BMS(2006)}. In particular,
admissibility of a trajectory segment $S^{[0,T]}(x_0)=\left\{S^t(x_0)=\left(x(t),x'(t)\right)\big| 0\le t\le T\right\}$,
lifted to the universal covering space $\tilde{\mathbf{Q_n}}$ of the configuration space
$\mathbf{Q_n}$, means two things:

  (i) any two, consecutively visited obstacles (in the universal covering space
  of the configuration space, as always) are always different and the convex hull of them does not intersect
  any other obstacle in $\tilde{\mathbf{Q_n}}$;

  (ii) the obstacle visited at the $k$-th collision $k=2, 3, 4,\dots$ does not intersect
  the convex hull of the obstacles visited at the $k-1$-st and $k+1$-st collisions.

\medskip
  
For a fixed admissible sequence of obstacles $(\sigma_0, \sigma_1,\dots ,\sigma_n)$
from $\tilde{\mathbf{Q_n}}$, consider all piecewise smooth curves
\[
\left\{x(t)\big| 0\le t\le A\right\}
\]
in $\tilde{\mathbf{Q_n}}$ with the arc-length parametrization and with the following property:

(P) There are numbers $0=t_0<t_1<t_2<\dots <t_n=A$ such that $x(t_i)\in\partial\sigma_i$
($i=0,1,2,\dots, n$), and for all other values of $t$ the point $x(t)$ is in the interior
of $\tilde{\mathbf{Q_n}}$.

Elementary inspection shows that amongst the above curves there is a unique shortest one,
and the shortest curve is actually a billiard trajectory. Furthermore, for the shortest curve
the collisions at times $t_0=0$ and $t_n=A$ are perpendicular to the boundaries of the involved
scatterers $\sigma_0$ and $\sigma_n$, therefore the minimal curve $\left\{x(t)\big| 0\le t\le A\right\}$
is actually half of the periodic orbit $\left\{x(t)\big| 0\le t\le 2A\right\}$, where
$x(t)=x(2A-t)$ for $A\le t\le 2A$.

\medskip

The above observation is often called in the literature, in particular in \cite{BMS(2006)},
the variational principle, or variational method for the construction of admissible orbits.

\medskip
  
For the topological entropy $h_{top}(n)$ of the $n$-th system, in Theorem 4.1  we obtain
\[
\frac{1}{\sqrt{5}}\le \liminf_{n\to\infty}\frac{h_{top}(n)}{\log n} \le 
\limsup_{n\to\infty}\frac{h_{top}(n)}{\log n} \le 2\sqrt{2}.
\]

Finally, for the metric entropy $h_\mu (n,r)$ of the flow, in Section 5 we prove that for any fixed $n$,
\[
h_\mu(n,r)=\mathcal{O}(-r\log r)
\]
as $r\to 0$, and for $r=\mathcal{O}(1/n)$
\[
h_\mu(n,T)=\mathcal{O}(\log(n)),
\]
as $n\to \infty$ for the Poincare section map $T$.

\bigskip

\section{Radial Size Estimates for the Admissible Rotation Set}

\bigskip

As described in Section 2 of \cite{GS(2011)} (see page 4 in that paper, in particular Figures 1 and 2),
the free generators of the fundamental group
$\pi_1(Q_n)\cong F_{n+1}$ are $a,\, b_1,\, \dots,\, b_n$, so that each of these generators correspond
to a specific wall crossing of a trajectory: The generator $a$ corresponds to a trajectory
crossing the wall
$$
\{0\}\times [r,\, 1-r]
$$
(modulo $\mathbb{Z}^2$) in the positive direction, whereas the generator $b_i$ means that the
trajectory crosses the wall
$$
\left[\frac{i-1}{n}+r,\, \frac{i}{n}-r\right]\times\{0\}
$$
(taken again modulo $\mathbb{Z}^2$) in the positive direction. Needless to say, crossing these walls in the
opposite, negative directions means the inverses of the above mentioned elements of $\pi_1(Q_n)$.

\begin{figure}[h]
   \centerline{
   \includegraphics[width=0.6\textwidth, height=0.3\textheight]{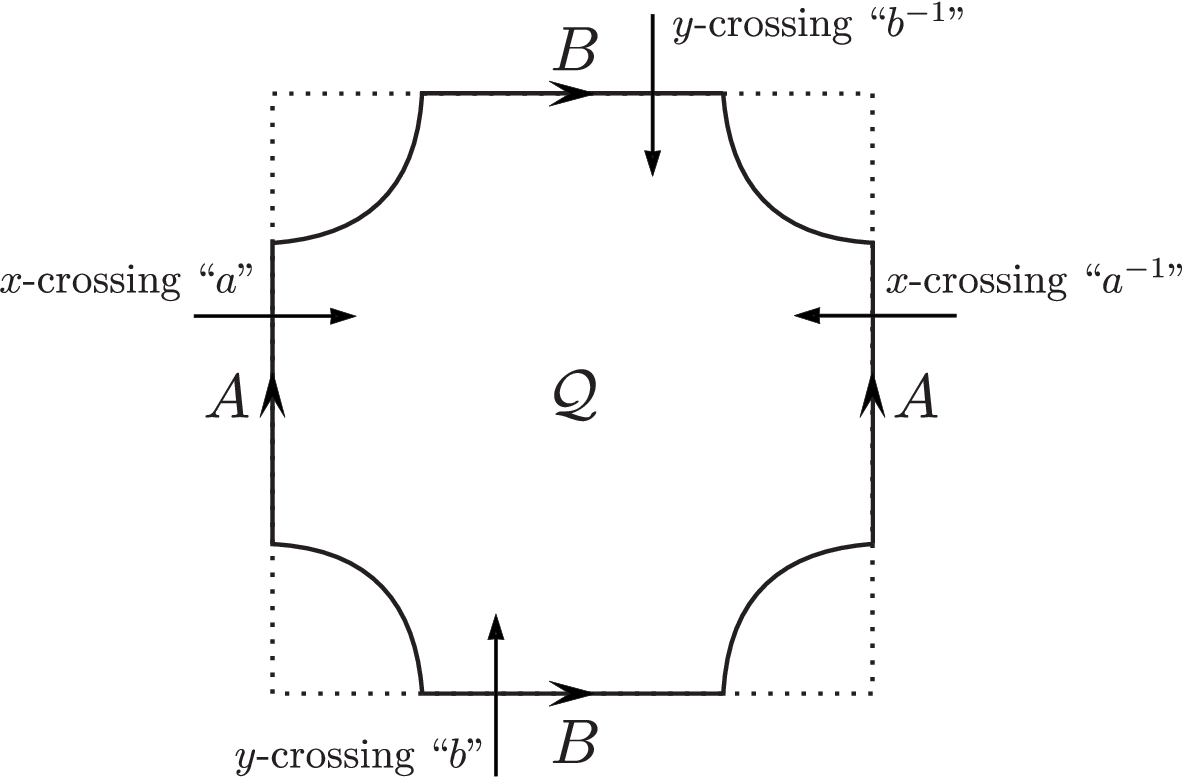}
   }
   \label{Figure 1}
   \caption{}
\end{figure}

\begin{figure}[h]
   \centerline{
   \includegraphics[width=0.6\textwidth, height=0.25\textheight]{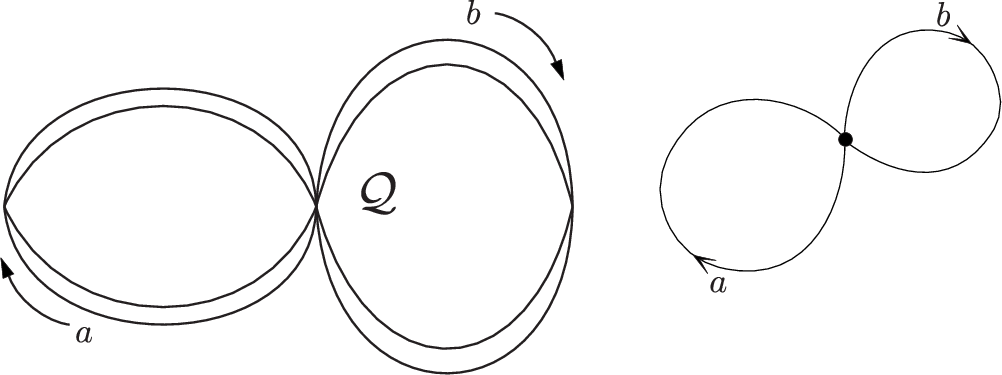}
   }
   \label{Figure 2}
   \caption{}
\end{figure}

First we prove the upper radial size estimate:

\medskip

\begin{theorem} $R\subset B(0, 2\sqrt{2})$
\end{theorem}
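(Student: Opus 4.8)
The plan is to bound the ``speed component'' $s=\lim_{T\to\infty}(1/T)\,\mathrm{dist}(g_T,1)$ of every homotopical rotation vector, where $g_T=[\gamma_T]\in\pi_1(Q_n)\cong F_{n+1}$ and $\mathrm{dist}(\cdot,\cdot)$ is the word metric with respect to the free generators $a,b_1,\dots,b_n$. Since the billiard flow has unit speed, a trajectory segment $S^{[0,T]}(x_0)$ has Euclidean length $T$, and the bounded closing-up modification changes lengths by $O(1)$; so it suffices to show that any closed loop $\gamma$ in $Q_n$ which is (locally) a billiard trajectory and has length $L$ represents an element $g$ with $\mathrm{dist}(g,1)\le 2\sqrt2\,L+O(1)$.

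First I would use the dictionary between generators and wall crossings recalled above: a crossing of the vertical wall $\{0\}\times[r,1-r]$ contributes a letter $a^{\pm1}$ and a crossing of the $i$-th horizontal wall $[\frac{i-1}{n}+r,\frac{i}{n}-r]\times\{0\}$ contributes $b_i^{\pm1}$. Lifting $\gamma$ to $\widetilde{Q_n}$, the word spelled by its successive wall crossings represents $g$, and $\mathrm{dist}(g,1)$ is the length of its free reduction. The naive bound $\mathrm{dist}(g,1)\le\#\{\text{wall crossings of }\gamma\}$ is far too weak, because a trajectory ``rattling'' between two neighbouring scatterers can cross a single wall $b_i$ of order $nL$ times while contributing nothing after reduction. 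So the real point is to pass to a geodesic representative: in the tree-like cover $\widetilde{Q_n}$ the reduced word $g$ is realized by the non-backtracking tile path from the initial tile to the terminal one, and (choosing the geodesic suitably, since replacing a null-homotopic excursion into a neighbouring tile by the corresponding arc inside the tile does not increase length) this tile path is carried by a geometric path $\eta$ with $\mathrm{length}(\eta)\le\mathrm{length}(\tilde\gamma)\le L+O(1)$ which crosses \emph{exactly} $\mathrm{dist}(g,1)$ walls, each at most once, with no cancellation.

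The core estimate is then: such a non-rattling, locally geodesic path $\eta$ crosses at most $2\sqrt2\,\mathrm{length}(\eta)+O(1)$ walls. On each straight piece of $\eta$ with unit velocity $(v_1,v_2)$, the number of crossings of the vertical integer lines $x\in\mathbb{Z}$ (which dominates the number of $a^{\pm1}$ letters contributed by that piece) is controlled by $|v_1|$, and the number of crossings of the horizontal integer lines $y\in\mathbb{Z}$ (dominating the $b_i^{\pm1}$ letters) by $|v_2|$; thus the instantaneous wall-crossing rate along $\eta$ is at most $|v_1|+|v_2|\le\sqrt2$, and the extra factor $2$ together with the additive $O(1)$ absorbs the endpoint corrections on each straight piece and the bounded number of crossings that occur while $\eta$ rounds a cluster of scatterers. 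Summing over the pieces, with $\sum_j\ell_j=\mathrm{length}(\eta)$, gives $\mathrm{dist}(g,1)\le 2\sqrt2\,(L+O(1))$, hence $s\le 2\sqrt2$ and $R\subset B(0,2\sqrt2)$.

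I expect the main obstacle to be precisely the book-keeping in the last paragraph: ensuring that the ``$O(1)$ per straight piece'' does not accumulate into an $n$-dependent error, since a long geodesic bends at a number of the tiny scatterers that is of order $nL$. The resolution should exploit that those many bends occur while $\eta$ hugs one side of a whole row (or column) of scatterers and therefore produce \emph{no} extra wall crossings, so that consecutive non-cancelling wall crossings of $\eta$ are separated by a definite fraction of a unit of arc length, uniformly in $n$ and $r$; combining this with the lower bound $\ge 1/n-2r$ on the free path between consecutive collisions is the technical heart of the argument.
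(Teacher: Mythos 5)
Your overall strategy runs parallel to the paper's: both arguments reduce the radial bound to the claim that a unit-speed billiard path contributes at most $2\sqrt{2}$ \emph{reduced} letters per unit of arc length, and both try to certify this by charging $a^{\pm1}$-letters to horizontal displacement and $b_i^{\pm1}$-letters to vertical displacement (the paper does this via a decomposition of the reduced word into $a$-blocks and $b$-blocks and the estimates $\int|x_1'|\ge k-s$, $\int|x_2'|\ge m-1$). Your reduction to a non-backtracking tile path in the tree-like cover is a clean way to dispose of cancellation, and the $a$-part of your estimate is sound: all vertical walls lie on the lines $x_1\in\mathbb{Z}$, two consecutive same-sign crossings of such lines force $\int|v_1|\ge 1$ between them, and within an $a$-block of a reduced word all signs agree.

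The gap is in the $b$-part, and it sits exactly where you flagged the ``technical heart'' --- but the resolution you sketch is false, not merely unproven. Consider the creeping trajectory that skims along the row of scatterers with nearly horizontal velocity (slope of order $rn$), reflecting alternately off the top of $D_i$ and the bottom of $D_{i+1}$; between consecutive reflections it crosses $x_2=0$ once, through the gap separating the two disks, alternately downward and upward, so its wall-crossing word is of the form $b_{i+1}^{-1}b_{i+2}b_{i+3}^{-1}\cdots$. Adjacent letters are distinct generators, so the word is already reduced and nothing backtracks in the tree; yet the path produces one letter per roughly $1/n-2r$ of arc length, with vertical variation only $\mathcal{O}(r)$ per letter. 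Hence it is not true that hugging a row produces no extra wall crossings, nor that consecutive non-cancelling crossings are separated by a definite fraction of a unit of arc length uniformly in $n$: two consecutive crossings of $x_2=c$ in opposite directions through \emph{different} gaps do not cancel in $F_{n+1}$ but cost only $\mathcal{O}(r)$ of vertical motion. Equivalently, the additive endpoint correction in ``number of crossings $\le\int|v_2|+\mathcal{O}(1)$ per straight piece'' genuinely accumulates over the $\sim nL$ collisions and cannot be absorbed; your argument as written yields only a bound of order $n$. (The same configuration puts pressure on the paper's own claim that the vertical motion within a $b$-block is at least $|B_j^b|-1$; note that the creeping collision sequence $D_i,D_{i+1},D_{i+2},\dots$ is \emph{inadmissible} in the sense of the Introduction, which is why it is invisible to the constructions of Theorem 2.2, but Theorem 2.1 concerns the unrestricted set $R$.) To repair the argument you would have to either exclude such creeping contributions from the limit defining $R$ or measure distance in $\pi_1(Q_n)$ with respect to a different (e.g.\ $n$-dependent) normalization.
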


\medskip

\begin{proof}  
Let us investigate the symbolic wall-crossing sequence, or associated word, $W$ of a trajectory segment $S^{[0,T]}x_0$
of the billiard flow. We write $W$ as the shortest product of generators $a, b_1, b_2, \dots ,b_n$ and their inverses,
i.e. only the exponents $1$ and $-1$ are permitted. In such a writing, a maximal subsequence of consecutive symbols
$a$ or $a^{-1}$ of $W$ will be called an $a$-block $B^a$, whereas a maximal subsequence of consecutive symbols
$b_j$ or $b_j^{-1}$ ($j=1, 2,\dots, n$) of $W$ will be called a $b$-block $B^b$. To be clear, in a $b$-block, distinct
symbols $b_i$ and $b_j$ ($i\ne j$) and their inverses are allowed to be elements of the same $b$-block.
For just simplifying the notations,
we assume that $W$ begins with an $a$-block $B_1^a$, and it ends with a $b$-block $B_s^b$, so that $W$ is of the form
\[
W=B_1^a B_1^b B_2^a B_2^b \dots B_s^a B_s^b.
\]
At the end of proving the upper radial size estimate $2\sqrt{2}$ for such words, we will be clearly pointing out the minor
differences in the argument to cover the other cases for the word $W$.

Let us denote the total number of the symbols $a$ and $a^{-1}$ in $W$ by $k$, whereas the total number of the symbols
$b_j^{\pm 1}$ ($j=1, 2, \dots ,n$) is denoted by $m$. Clearly, $s\le \text{min}\{k, m\}$.

Observe that for our trajectory segment
\[
S^{[0,T]}x_0=\left\{\left(x(t), x'(t)\right)\big|\; 0\le t\le T\right\}
\]
the estimate
\begin{equation}\label{integral}
\int_0^T \left|x'_1(t)\right|dt \ge k-s
\end{equation}
holds true. We note that here, and everywhere else in this paper, $(x(t), x'(t))$
denotes the phase point at time $t$, where $x(t)$ is the position of the moving particle, already
lifted to the universal covering space $\tilde{Q_n}$ of the configuration space
$Q_n=\mathbb{R}^2/\mathbb{Z}^2\setminus\cup_{i=0}^{n-1} D_i=\mathbb{T}^2\setminus\cup_{i=0}^{n-1} D_i$,
and $x'(t)=\dfrac{d}{dt}x(t)$ is its time derivative. Furthermore, $x_1(t)$ and $x_2(t)$ denote the
first and second coordinates of $x(t)$, and similar notational convention holds true for the time
derivative $x'(t)$.

Indeed, for each $a$-block $B_j^a$ the horizontal position changes at least by the amount $|B_j^a|-1$, where
$|B_j^a|$ is the number of symbols in $B_j^a$. Taking the sum of all these lower estimates, we obtain \ref{integral}.

The situation with $b$-blocks $B_j^b$ and the corresponding vertical move
\[
\int_0^T \left|x'_2(t)\right|dt
\]
is somewhat different. Namely, we consider not only the block $B_j^b$, but also the subsequent two blocks $B_{j+1}^a$
and $B_{j+1}^b$. Elementary inspection shows that, after having at least the amount of $|B_j^b|-1$ of vertical motion
with $B_j^b$, in order to switch from $B_j^b$ to $B_{j+1}^b$ through the block $B_{j+1}^a$, the orbit has to make at least
a unit of vertical motion betwen $B_j^b$ and $B_{j+1}^b$. (More precisely, this motion is at least $2-\mathcal{O}(1/n)$,
if one of $B_j^b$ and $B_{j+1}^b$ corresponds to an upward motion and the other one to a downward motion.)

Thus, by taking the sum of all these lower estimates, one obtains the lower bound 
\begin{equation}\label{integral2}
\int_0^T \left|x'_2(t)\right|dt \ge m-1.
\end{equation}
Taking the sum of \ref{integral} and \ref{integral2}, one ends up with the estimates
\begin{equation}\label{integral3}
m+k-s-1 \le\int_0^T \left||x'(t)\right||_1dt \le \sqrt{2}T,
\end{equation}
which, in turn, implies that
\[
\text{max}\{m,k\}-1\le \sqrt{2}T,
\]
thus
\begin{equation}\label{wupper}
|W|=m+k\le 2\sqrt{2}T+2.
\end{equation}
Dividing \ref{wupper} by $T$ and passing to the limit $T\to \infty$ yields
\[
R\subset B(0, 2\sqrt{2}).
\]

As said earlier, here we review the minor changes in the proof that are necessary to make, if the first block of $W$
is not an $a$-block, or the last block of it is not a $b$-block. Then, by simply truncating the unwanted block(s) at the
two ends of $W$ (thus also truncating the trajectory segment under investigation), we can reduce the study to the above case.
Note that, in this way, we make a bounded error in the right-hand-side of \ref{wupper}, but that error vanishes after division
by $T$ and passing to the limit $T\to \infty$. This completes the proof of the theorem.
\end{proof}

\medskip

\begin{theorem}
The lower radial estimate
\[
B\left(0, \frac{1}{\sqrt{5}}-\mathcal{O}(\frac{1}{n})\right)\subset AR
\]
holds for the admissible rotation set $AR$.
\end{theorem}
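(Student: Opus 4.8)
The plan is to realize every rotation vector of small radius by an explicit family of admissible periodic billiard orbits constructed via the variational principle, and then to estimate the word length of the loop such an orbit represents against its time-length. Since $AR$ is closed (Theorem 3.1, to be proved later, but we may invoke it), it suffices to produce, for a dense set of directions $e$ and speeds $s < 1/\sqrt5 - \mathcal O(1/n)$, an admissible periodic orbit whose homotopical rotation vector is $(s,e)$. Concretely, I would fix a pair of coprime integers $(p,q)$ and look for a periodic trajectory whose associated reduced word in $\pi_1(Q_n)=F_{n+1}$ is a power of $a^p b_{j_1} b_{j_2}\cdots b_{j_q}$ (or a suitable cyclically reduced variant), i.e. an orbit that, per period, crosses the vertical wall $p$ times and the horizontal walls $q$ times net, returning to its starting point on $\mathbb T^2$ after translating by the integer vector corresponding to $(p,q)$.

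The key geometric input is a length estimate for the variationally minimal orbit through a prescribed admissible obstacle sequence $(\sigma_0,\sigma_1,\dots,\sigma_N)$: I would show that if the straight segments between the centers of consecutively visited scatterers have total Euclidean length $L$, then the minimal billiard curve has length $A$ with $A \le L$ and $A \ge L - \mathcal O(r N)$, the correction coming from the $2r$ "shortcut" near each scatterer. For the candidate word above, choosing the obstacle centers to lie essentially along a straight line of rational slope on $\mathbb T^2$, the displacement per period is a lattice vector $(P,Q)$ with $|(P,Q)|$ proportional to $\max\{p,q\}$-ish, while the number of symbols in the word — hence the relevant notion of $\mathrm{dist}(g_T,1)$ in the Cayley graph — is $p+q$. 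The ratio $(p+q)/|(P,Q)|$ is worst (largest) when the translation direction forces many wall crossings per unit Euclidean length; a short computation on $\mathbb Z^2$ shows the extremal direction gives the constant $1/\sqrt5$: a displacement $(1,2)$ (or $(2,1)$) has Euclidean length $\sqrt5$ and costs $1+2=3$ symbols, but the relevant comparison is crossings-per-length, and the slope-$2$ (or slope-$1/2$) lines minimize the Euclidean length per lattice step in the cone of admissible primitive translations, yielding speed $\ge |(P,Q)|/(\text{word length}) \cdot (\text{something})$; I would track the constants carefully so that the ball of radius $1/\sqrt5 - \mathcal O(1/n)$ is covered. The $\mathcal O(1/n)$ loss absorbs both the $r$-shortcut corrections (with $r = \mathcal O(1/n)$) and the discretization error from fitting obstacle centers, which sit at spacing $1/n$ along the $x$-axis, onto the desired line.

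The main obstacle will be verifying \emph{admissibility} (conditions (i) and (ii) in the excerpt) for the chosen obstacle sequences uniformly in $n$: I must guarantee that the convex hull of two consecutively visited scatterers misses all others, and that no scatterer blocks the hull of its two neighbors. Because all scatterers lie on the single line $x_2=0$ (mod $\mathbb Z^2$) and have radius $r \ll 1/n$, a trajectory of rational slope with $|q|$ not too large relative to $n$ will visit scatterers whose centers are well separated transversally after lifting; the delicate case is near-horizontal pieces (the $a$-blocks, slope close to $0$), where consecutively visited scatterers can be the same row of disks and one must insert the horizontal crossings so that the convex hulls stay thin — this is exactly where the $-\mathcal O(1/n)$ correction is unavoidable and where I expect the bookkeeping to be heaviest. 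Once admissibility is established, the length estimate above gives $\mathrm{dist}(g_{T}, 1)/T \to s$ with $s$ as close to $1/\sqrt5$ as desired by taking the slope near the extremal value, the direction $e \in \mathrm{Ends}(\pi_1(Q_n))$ being the end determined by the periodic word; density of such $(s,e)$ in the target ball, together with closedness of $AR$, finishes the proof.
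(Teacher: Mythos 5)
Your construction only produces rotation vectors whose direction component $e$ is the end of $F_{n+1}$ determined by a periodic word of the special form $\left(a^p b_{j_1}\cdots b_{j_q}\right)^\infty$, i.e.\ ends of ``straight-line type'' on the torus. This is the decisive gap: the ball $B\bigl(0,\tfrac{1}{\sqrt5}-\mathcal{O}(\tfrac1n)\bigr)$ lives in the cone $C$ over $\text{Ends}(\pi_1(Q_n))$, so the theorem requires \emph{every} end --- every infinite reduced word in $a,b_1,\dots,b_n$ and their inverses, including words with arbitrarily wild interleaving of generators and inverses --- to be realized by admissible orbits at every speed up to $\tfrac{1}{\sqrt5}-\mathcal{O}(\tfrac1n)$. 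Two ends are close only if they share a long common prefix, so the ends of your special periodic words are \emph{not} dense in $\text{Ends}(\pi_1(Q_n))$; consequently the closedness of $AR$ cannot upgrade your family to the full ball. You are in effect conflating the homotopical (noncommutative) rotation set with the homological rotation set in $\mathbb{R}^2$, where realizing a dense set of rational slopes $(p,q)$ would indeed suffice. The numerology also reflects this: a genuine slope-$2$ orbit costs $3$ symbols per Euclidean length $\sqrt5$, giving Cayley-graph speed $3/\sqrt5$ in its own direction, far above $1/\sqrt5$; the constant $1/\sqrt5$ in the theorem is not an extremal lattice-direction length but a worst-case guarantee over all symbol transitions.

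The paper's proof is organized around exactly this point: it fixes an \emph{arbitrary} infinite reduced word $W_\infty=w_0w_1w_2\dots$ and constructs, by the variational method, admissible segments realizing each truncation, showing by a case analysis of all passages $w_j\to w_{j+1}$ (types $ab_i$, $b_ib_j^{-1}$, $aa$, $b_ib_j$, up to symmetry and time reversal) that each passage costs at most $\sqrt5+\mathcal{O}(1/n)$ units of time; the worst cases ($ab_n$, or $b_1$ followed by $b_n^{-1}$) force a bounce at the disk of index $\lfloor n/2\rfloor$ on the opposite edge of the unit cell, a path of length about $2\sqrt{1+\tfrac14}=\sqrt5$. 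Intermediate speeds in the same direction are then obtained by inserting idle runs, which lengthen the time without altering the reduced word --- a step your proposal also leaves unaddressed, since slowing a periodic orbit of your type while preserving its end needs an argument. To repair your approach you would have to prove the passage-by-passage time bound for arbitrary admissible symbolic sequences, at which point you have reproduced the paper's proof; the periodic-orbit machinery you set up is the natural tool for Theorem 3.2 (density of periodic rotation vectors), not for the lower radial estimate.
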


\medskip

\begin{proof}
For any given infinite word $W_\infty=w_0w_1w_2\dots$ (where each $w_j$ either a generator or the inverse of a generator, and
$W_\infty$ is in the shortest form) we are going to construct an infinite sequence
\[
\left(S^{[0, T_m]}x_m\right)_{m=1}^\infty 
\]
of admissible
orbit segements with $\lim_{m\to \infty}T_m=\infty$ and such that the symbolic wall crossing sequence of $S^{[0, T_m]}x_m$ is
$w_0w_1\dots w_m$. In this process of constructing admissible orbit segments we use the arc-length minimizing variational
method, described earlier. We will try to economize time and make $T_m$ as small as possible, thus making
the escape speed
\[
\liminf_{m\to\infty}\frac{m}{T_m}
\]
as large as possible. We want to prove that
\begin{equation}\label{liminfbig}
\liminf_{m\to\infty}\frac{m}{T_m} \ge \frac{1}{\sqrt{5}}-\mathcal{O}(\frac{1}{n})
\end{equation}
can be achieved. Then, by ``slowing down'' the speed of escape to infinity by inserting ``idle runs'', just as in the
proofs of Theorems 3.1 and 3.3 in \cite{MS(2017)},
one gets that any speed $s$, $0\le s\le \frac{1}{\sqrt{5}}-\mathcal{O}(\frac{1}{n})$, is achievable while escaping to
infinity in the given (but arbitrary!) direction of $W_\infty$, thus proving the desired set inequality
\[
B\left(0, \frac{1}{\sqrt{5}}-\mathcal{O}(\frac{1}{n})\right)\subset AR.
\]

In order to prove \ref{liminfbig}, it is enough to show that, when constructing longer and longer orbit segments with the symbolic
sequence $w_0w_1\dots w_m$, in order to construct the passage $w_j\to w_{j+1}$, it is sufficient to spend at most the time
$\sqrt{5}+\mathcal{O}(1/n)$. We consider all possible, mutually non-isomorphic passages $w_j\to w_{j+1}$ by excluding geometrically
symmetric cases and cases equivalent after time reversal. After this reduction, here is the list of the remaining four cases
for the passages:

\subsection{Case 1. Passage $ab_i$}
Direct inspection shows that here the worst case scenario is when $i=n$ and the admissible orbit
(under construction) enters the unit fundamental cell at the upper end of the left vertical
$a$-wall. Then, the most efficient admissible construction bounces back at the disk with index
$\left\lfloor\dfrac{n}{2}\right\rfloor$ at the lower edge of the unit cell, before leaving it
upwards through the wall $b_n$. The time spent to do this is at most $\sqrt{5}+\mathcal{O}(1/n^2)$.

\medskip

\begin{remark} The possible error $\mathcal{O}(1/n^2)$ may be caused by the occurence of an odd value
of $n$, $n=2q+1$, and having to select $D_q$ or $D_{q+1}$ at the bottom side of the unit cell.
\end{remark}

\medskip

\subsection{Case 2. Passage $b_ib_j^{-1}$ ($i\ne j$)}

\bigskip

Here again, direct inspection shows that the least favorable (i.e. the most time consuming) situation is
when $|i-j|$ is maximal, i.e. $i=1$ and $j=n$. (Or the other way around.) Then the most time efficient construction
of the admissible trajectory in the unit fundamental cell $[0,1]\times [0,1]$ is to have it bounce back at the disk
with index $\lfloor n/2\rfloor$ at the ceiling of the unit cell. This takes time $\sqrt{5}+\mathcal{O}(1/n)$.

\begin{remark} As opposed to the previous case, the reason why the error is of order $\mathcal{O}(1/n)$
(and not $\mathcal{O}(1/n^2)$) is that, in the recursive construction of the passages of the admissible
trajectory, it can happen that the incoming wall crossing $b_1$ takes place while bouncing back at the
scatterer $D_1$. In order to facilitate the admissible passage to $D_{\lfloor n/2\rfloor}$ at the ceiling of the
unit fundamental cell, the orbit needs to visit the scatterer $D_0$ first, before shooting towards
$D_{\lfloor n/2\rfloor}$ at the ceiling. This takes an extra time of $\mathcal{O}(1/n)$.
\end{remark}

\subsection{Case 3. Passage $aa$}
By excluding symmetric cases, we may assume that the orbit enters the unit fundamental cell by bouncing back at $D_0$ at the top of the
left wall ``$a$''. Then we can have it exit the fundamental unit cell through the right wall ``$a$'' by bouncing back from
$D_0=D_n$ at the bottom of this wall. This takes time $\sqrt{2}$.

\subsection{Case 4. Passage $b_ib_j$}
Here again the least favorable case is when $|i-j|$ is maximal, i.e. $i=1$ and $j=n$ (or, vice versa). Then an admissible trajectory
construction in the unit fundamental cell can diagonally traverse the cell, similarly to Case 3, thus spending time not more
than $\sqrt{2}+\mathcal{O}(1/n)$ in the cell. The $\mathcal{O}(1/n)$ error term should be included here for the same reason
as in Case 2, but this does not matter, since $\sqrt{2}+\mathcal{O}(1/n)<\sqrt{5}$.

Summarizing all the above, when constructing the admissible orbit segments $S^{[0,T_m]}x_m$ with the symbolic sequence
$W_m=w_0w_1\dots w_m$ (being the $m$-th truncation of the given, infinite admissible word $W_\infty=w_0w_1\dots$)
we can achieve that
\[
\liminf_{m\to\infty}\frac{m}{T_m}\ge\frac{1}{\sqrt{5}+\mathcal{O}(1/n)}=\frac{1}{\sqrt{5}}-\mathcal{O}(1/n).
\]
This completes the proof of the theorem.
\end{proof}

\section{Geometric Properties of the Admissible Rotation set $AR$}

\bigskip

\begin{theorem}
The set $AR$ is a convex, compact subset of the infinite cone $C$ erected upon the topological Cantor set
$\text{Ends}(\pi_1(Q_n))$.

(Here the set $\text{Ends}(\pi_1(Q_n))$ denotes the set of all ends, i.e. the horizon of, the hyperbolic
group $\pi_1(Q_n)$, see (1) on page 2 of \cite{MS(2017)}.)
\end{theorem}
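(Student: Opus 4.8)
The plan is to establish the two properties separately. For \emph{convexity}, I would work directly with the cone structure: a point of $C$ is a pair $(s,e)$ with $s\ge 0$ and $e\in\text{Ends}(\pi_1(Q_n))$, all points with $s=0$ being identified. Convexity of $AR$ then has to be interpreted radially, exactly as in \cite{BMS(2006)} and \cite{MS(2017)}: for a fixed end $e$, the set of admissible speeds $\{s : (s,e)\in AR\}$ is an interval of the form $[0,s_{\max}(e)]$. This is precisely what the construction in the proof of Theorem 2.2 already gives us — once a speed $s$ in a given direction $W_\infty$ is realized by an admissible sequence, every smaller speed is realized by inserting ``idle runs'' (the slowing-down argument borrowed from the proofs of Theorems 3.1 and 3.3 of \cite{MS(2017)}). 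So the first step is to record that, for each end $e$, $AR$ contains the whole segment from $0$ to the supremal admissible speed in direction $e$, and that this supremum is attained (by a limiting/diagonal argument over longer and longer admissible truncations). That yields the radial convexity statement.

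For \emph{compactness} I would argue that $AR$ is closed and contained in a bounded (hence, in this cone, precompact) subset of $C$. Boundedness is immediate from Theorem 2.1: $AR\subset R\subset B(0,2\sqrt2)$, so every point of $AR$ has speed coordinate at most $2\sqrt2$. The truncated cone $\{(s,e): 0\le s\le 2\sqrt2\}$ is compact because $\text{Ends}(\pi_1(Q_n))$ is a compact (Cantor) space and $[0,2\sqrt2]$ is compact, with the cone-point identification being a quotient by a closed equivalence relation. So it remains to show $AR$ is closed. Here I would take a sequence $(s_k,e_k)\to(s_\infty,e_\infty)$ in the truncated cone with each $(s_k,e_k)\in AR$ and produce an admissible realization of the limit. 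The end coordinates $e_k$ converge in the Cantor topology, meaning the corresponding geodesic rays in the Cayley graph agree on longer and longer initial segments; combined with $s_k\to s_\infty$, a diagonal extraction over admissible orbit segments realizing $(s_k,e_k)$ on initial words of growing length, together with compactness of the billiard phase space, should yield a limiting admissible trajectory whose homotopical rotation vector is $(s_\infty,e_\infty)$.

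The main obstacle, I expect, is the closedness of $AR$ — specifically, the passage to the limit must preserve \emph{admissibility} in the sense of conditions (i) and (ii) in the excerpt (consecutive obstacles distinct, convex hulls avoiding other scatterers, no obstacle meeting the convex hull of its neighbours). Admissibility is an open-type condition on finite stretches of the itinerary but it must hold for the \emph{entire} infinite itinerary of the limit, so one has to check it does not degenerate in the limit (e.g. two obstacles that were ``just barely'' admissible collapsing to a forbidden configuration). I would handle this by exploiting the variational/minimality description: the admissible orbit segments in Theorem 2.2 are built as arc-length minimizers over fixed admissible obstacle sequences, and along a convergent subsequence the obstacle sequences stabilize on every fixed finite window (because $e_k\to e_\infty$ forces agreement of the words on growing prefixes); the minimizer for the limiting obstacle sequence is then the limit of the minimizers, and it inherits admissibility from the stabilized finite windows. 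A second, more routine point to verify is that the escape-speed functional $(s,e)\mapsto s$ is genuinely realized as a limit of $m/T_m$ along this diagonal sequence and not just as a $\liminf$ that could drop; this follows from uniform control on the time cost of each passage $w_j\to w_{j+1}$, which is exactly the bounded-passage-time estimate ($\le\sqrt5+\mathcal O(1/n)$ per step) established in the proof of Theorem 2.2.
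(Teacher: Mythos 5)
Your convexity argument coincides with the paper's: the author likewise reduces convexity to the star-shaped (radial) property, using that the base $\text{Ends}(\pi_1(Q_n))$ is totally disconnected so that geodesics between points over distinct ends pass through the vertex, and then realizes every smaller speed in a given direction by inserting idle runs, exactly as you propose (following Theorem 3.3 of \cite{MS(2017)}). The difference is in the compactness half: the paper's proof of Theorem 3.1 in fact says nothing about compactness beyond the framing remark --- closedness of $AR$ is only asserted as ``plain'' in the introduction, and boundedness is implicitly inherited from Theorem 2.1 --- whereas you spell out an actual argument: $AR\subset R\subset B(0,2\sqrt2)$ sits inside the compact truncated cone, and closedness is obtained by a diagonal extraction over admissible realizations of $(s_k,e_k)\to(s_\infty,e_\infty)$, with attention to the two genuine technical points (persistence of the admissibility conditions (i)--(ii) in the limit, via stabilization of the obstacle words on finite windows and convergence of the arc-length minimizers, and exact realization of the limiting speed via the uniform per-passage time bound $\sqrt5+\mathcal O(1/n)$). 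So your route buys a self-contained justification of the compactness claim that the paper leaves implicit, at the cost of the extra limiting argument; the paper's route is shorter because it treats closedness as part of the definition/setup of the rotation sets. Your sketch of the closedness step is plausible and identifies the right obstacles, though to make it fully rigorous you would still need to be precise about how convergence $e_k\to e_\infty$ in the ends topology translates into agreement of the orbit itineraries on growing prefixes.
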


\begin{proof}
First we note that the convexity of a compact subset $K$ of the cone
$C$ is equivalent to the star shaped property of $K$. This is a
consequence of the fact that the basis $\text{Ends}(\pi_1(Q_n))$ of the
cone is totally disconnected.  Indeed, the shortest path (geodesic
line) connecting two points $(s_1, e_1)$ and $(s_2, e_2)$ of $C$
($s_1>0$, $s_2>0$, $e_1\ne e_2$) is the curve $\gamma$ that first connects
$(s_1, e_1)$ with the vertex $(0, e_1)$ by a linear change in the
radial speed coordinate $s$, then it connects the vertex with $(s_2, e_2)$
by also a linear change in the $s$ coordinate.

We follow the ideas of the proof of Theorem 3.3 of \cite{MS(2017)}. Indeed,
the cone $C$ is a totally disconnected, Cantor set-type family of
infinite rays that are glued together at their common endpoint, the
vertex of the cone. Therefore, the convexity of $AR$ means that for
any $(s,e)\in AR$ and for any $t$ with $0\le t\le s$ we have $(t,e)\in
AR$. However, this immediately follows from our construction, since we
can always insert a suitable amount of idle runs into an admissible
orbit segment to be constructed, hence slowing it down to the
asymptotic speed $t$, as required.
\end{proof}

\begin{theorem} The rotation vectors $(s,e)\in AR$ that correspond to periodic admissible trajectories
form a dense subset of $AR$.
\end{theorem}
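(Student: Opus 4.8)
The plan is to mimic the density-of-periodic-rotation-vectors arguments from \cite{BMS(2006)} and \cite{MS(2017)}, adapted to the current construction. Fix a target point $(s,e)\in AR$ and a tolerance $\varepsilon>0$; we must produce a periodic admissible trajectory whose rotation vector lies within $\varepsilon$ of $(s,e)$ in the cone metric. Since $e\in\text{Ends}(\pi_1(Q_n))$, choose a reduced infinite word $W_\infty=w_0w_1w_2\dots$ representing the ray in the direction $e$, and let $W_N=w_0w_1\dots w_N$ be a long truncation. The first step is to observe that by the admissibility-construction machinery in the proof of Theorem 2.2, $W_N$ (after possibly a bounded modification near its two ends to make the initial and terminal collisions perpendicular) is realized by a genuine admissible billiard trajectory segment $S^{[0,A_N]}x_N$, and by the variational principle that minimal segment is exactly half of a periodic orbit $\gamma_N$ of period $2A_N$ obtained by reflecting the segment, $x(t)=x(2A_N-t)$.

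The second step is to control the rotation vector of the periodic orbit $\gamma_N$. Going out along $W_N$ and then coming back along its reverse $w_N\dots w_1w_0$ means the total word of one full period is $W_N\cdot(W_N)^{-1}$-type in shape, but in the hyperbolic group the relevant quantity is the distance to the identity of the group element traversed after time $t$; along the first half this distance grows like (a fixed fraction of) the word length of the prefix, and on the second half it retraces. The key point is that the \emph{direction} in which $g_t$ escapes during the first half is precisely the direction $e$ (up to the bounded end-modification), so the end-coordinate of the rotation vector of $\gamma_N$ is exactly $e$; only the speed needs to be matched. The speed of $\gamma_N$ is $\operatorname{dist}(g_{A_N},1)/(2A_N)$ up to vanishing terms, and by the estimates of Theorem 2.2 (the $\sqrt5+\mathcal{O}(1/n)$ per-passage bound, together with the ``idle run'' slow-down device already invoked in Theorems 3.1 and 3.2) this speed can be tuned to any value in $[0,\tfrac1{\sqrt5}-\mathcal{O}(1/n)]$ and in particular made to converge to the prescribed $s$ as we let $N\to\infty$ and adjust the number of inserted idle runs. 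Hence the rotation vector $(s_N,e)$ of $\gamma_N$ converges to $(s,e)$ in $C$.

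The third step handles the case $e'\ne e$ points of $AR$ that are \emph{not} on a single ray limit, i.e. showing the periodic rotation vectors are dense and not merely that each ray is approximable: since the cone metric makes $(t,e)$ and $(t',e')$ close only when either $t,t'$ are both small or $e=e'$ and $t\approx t'$, density near the vertex is free (the zero vector is the rotation vector of, e.g., a short periodic orbit bouncing between two neighbouring scatterers, or is approximated by very slow idle-run-dominated periodic orbits), and density on each ray is exactly the second step; together these give density in all of $AR$ because every point of $AR$ has the form $(s,e)$ with $e$ realized by some admissible end-word. I would also remark that convexity/star-shapedness of $AR$ (Theorem 3.1) is what guarantees that once we have $(s,e)\in AR$ we may freely decrease the speed, so every $(t,e)$ with $0\le t\le s$ is likewise approximated by the slowed-down periodic orbits.

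The main obstacle, and the step deserving the most care, is the second one: verifying that the periodic orbit obtained by ``out-and-back reflection'' of the variational segment for $W_N$ genuinely escapes in the direction $e$ and that its escape speed is (asymptotically) the escape speed of the one-sided construction — one must check that the bounded end-corrections forced by perpendicularity of the first and last collisions do not shift the limiting end, and that the retracing second half does not create cancellation in the Cayley graph that would lower the effective speed below what the per-passage time budget predicts. This is precisely where the hyperbolicity of $\pi_1(Q_n)$ (so that $\operatorname{dist}(g\cdot h^{-1},1)$ behaves controllably for a reduced word and its reverse) and the admissibility conditions (i)–(ii), which prevent the trajectory from back-tracking through an already-visited scatterer, are used; I would cite the analogous lemma in \cite{MS(2017)} rather than re-prove it.
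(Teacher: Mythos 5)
There is a genuine gap, and it sits exactly where you placed your ``step deserving the most care.'' The periodic orbit you build by the out-and-back reflection of the variational segment (perpendicular collisions at both ends, $x(t)=x(2A_N-t)$) has per-period homotopy class $W_N\,W_N^{-1}=1$: the second half retraces the first half along the very same path with reversed orientation, so every wall crossing of the first half is cancelled by the second half. Consequently, for this periodic orbit the group element $g_T$ stays bounded (by the length of the half-word) for all $T$, and the rotation vector, which is defined through the limit of $\frac{1}{T}\operatorname{dist}(g_T,1)$ as $T\to\infty$, is the vertex of the cone regardless of $N$. The cancellation you worried about is not a technical nuisance to be controlled by hyperbolicity of $\pi_1(Q_n)$ --- it is total, and your construction can only approximate points of $AR$ with $s=0$. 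Computing the ``speed'' as $\operatorname{dist}(g_{A_N},1)/(2A_N)$ is a one-period (in fact half-period) quantity and does not survive the $T\to\infty$ limit that defines the rotation vector of the periodic trajectory.

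The paper closes up the orbit in a different way, by translation rather than by reflection. Starting from an arbitrary admissible segment $S^{[0,T]}x_0$ whose finite-time data approximate the target $(s,e)$ (such segments exist by the very definition of $AR$), one extends its admissible collision sequence $(C_0,\dots,C_n)$ by a \emph{bounded} bridge to $(C_0,\dots,C_m)$, $m-n\le K$, so that $C_m=C_0+(a,b)$ for some $(a,b)\in\mathbb{Z}^2$ and the junction $\left(C_{m-1},C_m,C_1+(a,b)\right)$ is again admissible. Anchoring the endpoints with $x(T)=x(0)+(a,b)$ and minimizing length then produces an orbit that projects to a genuinely closed admissible orbit of $Q_n$ whose per-period word is the original word up to a bounded modification; its powers escape to infinity, so the end coordinate is close to $e$ (long common prefix) and the per-period speed is close to that of the original segment, giving density. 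If you want to salvage your outline, replace the reflection step by such a translational closure (or cite the corresponding closing construction of Theorem 3.4 in \cite{MS(2017)}); the rest of your scheme --- choosing a long truncation in the direction $e$, tuning the speed with idle runs, and handling the vertex separately --- then fits together.
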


\begin{proof} We adopt the main ideas of the proof of Theorem 3.4 of \cite{MS(2017)}. Consider an arbitrary
  trajectory segment
  \[
  S^{[0,T]}x_0=\left\{S^t x_0=(x_t,x'_t)\big|\; 0\le t\le T\right\}
  \]
  with $x(0), x(T)\in \partial Q_n$ that bounces back from the scatterers with centers at
  $C_0, \dots ,C_n$ (so that $\Vert C_i-x(t_i)\Vert=r$ for $0=t_0<t_1<t_2<\dots <t_n=T)$ with an admissible symbolic
  sequence $(C_0, C_1, \dots ,C_n)$. We want to find a periodic, admissible trajectory near $S^{[0,T]}x_0$
  such that its rotation vector is close to the rotation vector of $S^{[0,T]}x_0$.

  A direct inspection shows that the symbolic sequence $(C_0, C_1, \dots ,C_n)$ has a bounded
  extension to a longer admissible sequence $(C_0, C_1, \dots ,C_m)$, i.e. $m>n$ and $m-n$ has
  an upper bound $K$, independent of $S^{[0,T]}x_0$ and $n$, such that
  \begin{enumerate}
  \item[$(1)$] $C_m=C_0+(a,b)$, \, $a, b\in\mathbb{Z}$,
  \item[$(2)$] $\left(C_{m-1}, C_m, C_1+(a,b)\right)$ is admissible.
  \end{enumerate}
  Then, by anchoring the trajectory segment at $C_0$ and $C_m$ with the configuration points
  $x(0)$ (with $\Vert x(0)-C_0\Vert=r$) and $x(T)=x(0)+(a,b)$, and performing the usual length minimization,
  we obtain a periodic, admissible orbit $S^{[0,T']}x_0'$ whose rotation vector is close to the rotation
  vector of $S^{[0,T]}x_0$.
\end{proof}

\section{Topological Entropy}

\bigskip

Here we prove

\begin{theorem}
\[
\frac{1}{\sqrt{5}}\le\liminf_{n\to\infty}\frac{h_{top}(n)}{\log(n)}\le
\limsup_{n\to\infty}\frac{h_{top}(n)}{\log(n)}\le 2\sqrt{2}.
\]
\end{theorem}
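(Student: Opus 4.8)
The plan is to bound $h_{top}(n)$ both from above and from below by relating the topological entropy of the $n$-th billiard flow to the exponential growth rate of admissible symbolic words in the free group $\pi_1(Q_n)\cong F_{n+1}$, using the radial estimates of Theorems 2.1 and 2.2 as the conversion factor between ``word length'' and ``time.'' For the \emph{lower} bound, fix a large $n$ and build a rich family of admissible periodic orbits by concatenating the elementary passages catalogued in the proof of Theorem 2.2. The key observation is that, after any symbol $b_i^{\pm1}$, one may admissibly continue with $b_j^{\pm1}$ for a positive proportion of the $n$ indices $j$ (those with $|i-j|$ not too small), each such passage costing time at most $\sqrt5+\mathcal O(1/n)$; this yields, for every long time $T$, at least $n^{(1-\mathcal O(1/n))T/(\sqrt5+\mathcal O(1/n))}$ distinguishable, pairwise $(\epsilon$-)separated admissible orbit segments of length $T$. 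Taking $\tfrac1T\log$ of this count and letting $T\to\infty$, then $n\to\infty$, gives $\liminf_{n\to\infty} h_{top}(n)/\log n \ge 1/\sqrt5$. (One must check these orbits are genuinely separated in the phase space, not merely combinatorially distinct — but distinct admissible wall-crossing words force the configuration projections to differ by an $n$-independent amount once the words first disagree, so a fixed metric $\epsilon$ works.)

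For the \emph{upper} bound, the idea is dual: every orbit segment $S^{[0,T]}x_0$ has an associated reduced word $W$ in the generators $a,b_1,\dots,b_n$, and by \eqref{wupper} in the proof of Theorem 2.1 we have $|W|\le 2\sqrt2\,T+2$. The number of reduced words of length $\ell$ in $F_{n+1}$ is at most $(2n+2)(2n+1)^{\ell-1}\le (2n+2)^\ell$, so the number of distinct words realizable by time-$T$ segments is at most $\sum_{\ell\le 2\sqrt2 T+2}(2n+2)^\ell \le C\,(2n+2)^{2\sqrt2 T+2}$. If an $(\epsilon,T)$-separated set of orbits all share the same word $W$, its cardinality is bounded independently of $n$ and $T$ (the billiard dynamics within a fixed homotopy class of length $\le 2\sqrt2 T+2$, hence with a bounded number of collisions per unit time, has uniformly bounded complexity for fixed $n$ — here one invokes the dispersing property and the standard fact that a $2D$ Sinai billiard has finite topological entropy for each fixed $r,n$). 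Combining, the maximal cardinality of an $(\epsilon,T)$-separated set is at most $C(n,\epsilon)\cdot(2n+2)^{2\sqrt2 T+2}$; taking $\tfrac1T\log$, letting $T\to\infty$ and then $n\to\infty$ gives $\limsup_{n\to\infty} h_{top}(n)/\log n\le 2\sqrt2$.

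I expect the main obstacle to be the upper-bound step of controlling the \emph{intra-word} separation, i.e.\ showing that the number of $(\epsilon,T)$-separated orbits sharing a fixed symbolic word $W$ does not itself grow with $n$ or $T$ in a way that spoils the estimate. The clean way around this is to pass to the collision (Poincaré section) map: each symbol of $W$ corresponds to at most a bounded number of collisions, so $|W|\le 2\sqrt2T+2$ caps the number of collisions in time $T$ by $O(T)$ with an $n$-independent constant, and then the well-known bound on the number of distinct collision-combinatorics of a dispersing billiard with $n$ fixed scatterers (growth rate $\le \log(\text{number of scatterers in a lift neighborhood})$ per collision, which for our configuration is again $\mathcal O(\log n)$ per collision but with the geometry forcing the $2\sqrt2$ time-normalization) finishes the argument. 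One should also verify that the $\liminf$/$\limsup$ normalization by $\log n$ is legitimate — that $h_{top}(n)\to\infty$, which follows already from the lower bound — and note that the metric-entropy estimates of Section 5 are consistent with, and in fact imply via the variational principle, the $\mathcal O(\log n)$ side of the upper bound, providing a useful cross-check.
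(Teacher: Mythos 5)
Your overall strategy coincides with the paper's: both bounds are extracted from Theorems 2.1 and 2.2 by playing the number of realizable reduced words off against the elapsed time, and your lower-bound count $n^{(1-o(1))T/(\sqrt5+\mathcal{O}(1/n))}$ is exactly the paper's $(2n+1)^{(5^{-1/2}-\mathcal{O}(1/n))T}$. (Your worry about an $n$-independent $\epsilon$ is unnecessary: for each fixed $n$ one sends $\epsilon\to 0$ after $T\to\infty$, so a separation scale of order $1/n$ is perfectly adequate; your claim that distinct words force an $n$-independent configuration discrepancy is in fact dubious, but harmless.) Where the paper differs is that it never argues with separated sets directly: it introduces the explicit partition $\mathcal{P}$ into the $2n+2$ thin wall neighborhoods $D_k^{\pm}$, $S^{\pm}$ and the remainder $B$, asserts that $\mathcal{P}$-itineraries determine the homotopy type and that $\mathcal{P}$ is generating, and then both bounds become counts of $\mathcal{P}$-itineraries.

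The genuine gap is in your upper bound, at precisely the point you flag. First, the assertion that each symbol of $W$ corresponds to a bounded number of collisions, so that $|W|\le 2\sqrt2\,T+2$ caps the collision count by $\mathcal{O}(T)$ with an $n$-independent constant, is false: word length counts wall crossings, not collisions, and between two consecutive crossings an orbit can collide many times (free paths in the gaps between adjacent scatterers are of order $1/n$, so order $nT$ collisions in time $T$ are possible with an empty word). Second, and more seriously, the cardinality of an $(\epsilon,T)$-separated set of segments sharing one reduced word is not bounded independently of $T$, nor does it grow at an $n$-independent exponential rate: consider orbits that bounce alternately off the upper sides of the scatterers near $y=0$ and the lower sides of their vertical translates near $y=1$, choosing at each bounce which of roughly $cn$ scatterers to hit next while keeping $x$ inside $(0,1)$; such orbits never meet the lines $y\in\mathbb{Z}$ or $x\in\mathbb{Z}$, hence all carry the trivial word, yet they realize on the order of $(cn)^{T}$ distinct collision sequences in time $T$. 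So the fibre over a single word has exponential growth rate itself of order $\log n$ --- the same order as the main term --- and cannot be absorbed after dividing by $\log n$; as written, your upper bound would only yield a constant strictly larger than $2\sqrt2$. This is exactly the step the paper localizes differently, in the asserted generating property of $\mathcal{P}$ and the bound on the number of $\mathcal{P}$-itineraries coming from Theorem 2.1; any separated-set version needs a real substitute for that assertion, not a bounded-fibre claim. Finally, your closing ``cross-check'' is backwards: the variational principle gives $h_{top}\ge h_\mu$, so the metric-entropy estimates of Section 5 cannot imply any upper bound on $h_{top}$.
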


\begin{proof} It turns out that the estimates of this theorem are corollaries of Theorems 2.1 and 2.2.
  Indeed, following the ideas of Section 5 of \cite{MS(2017)}, we construct a generating partition
  $\mathcal{P}$ for the topological entropy of the billiard flow, as follows:

  We define the following subsets of the full phase space $M=M_n$ of the studied system with $n$
  scatterers:

  \begin{equation}\label{dplus}
    D_k^+=\left\{(q,v)\in M\big|\; \frac{k}{n}<q_1<\frac{k+1}{n},\, \{q_2\}<\epsilon_0\right\},
    \end{equation}

  \begin{equation}\label{dminus}
    D_k^-=\left\{(q,v)\in M\big|\; \frac{k}{n}<q_1<\frac{k+1}{n},\, 1-\epsilon_0<\{q_2\}\right\}
    \end{equation}
for some $0<\epsilon_0<<r$ (i.e. $\epsilon_0=o(r)$ is small ordo of $r$), $k=0, 1, \dots, n-1$,

\begin{equation}\label{splus}
    S^+=\left\{(q,v)\in M\big|\; \{q_1\}<\epsilon_0\right\},
    \end{equation}

\begin{equation}\label{sminus}
    S^-=\left\{(q,v)\in M\big|\; \{q_1\}>1-\epsilon_0\right\},
    \end{equation}
where $\{x\}=x-\lfloor x \rfloor$ denotes the fractional part of $x$. By definition, the partition $\mathcal{P}$
consists of all sets in \ref{dplus}, \ref{dminus}, \ref{splus}, \ref{sminus} and the rest of the phase space

\begin{equation}\label{slab}
  B=M\setminus\left(\bigcup_{k=0}^{n-1} D_k^+\cup\bigcup_{k=0}^{n-1} D_k^- \cup S^+\cup S^-\right).
  \end{equation}
Just like in Section 5 of \cite{MS(2017)}, an easy geometric inspection shows that the $\mathcal{P}$-itinerary
$(P_1, P_2,\dots ,P_k)$ of a trajectory segment $S^{[a,b]}x_0$ uniquely determines the homotopy type of $S^{[a,b]}x_0$.

Similarly, if one considers the discrete-time dynamics, i.e. the collision map, then the full $\mathcal{P}$-itinerary
of a phase point uniquely determines the phase point, that is, the partition $\mathcal{P}$ is a
generating partition for the topological entropy.

The lower radial estimate of Theorem 2.2 allows us to construct admissible orbit segments $S^{[0,T]}x$,
with a given time span $T$, with at least as many as

\begin{equation}\label{hatvany}
  (2n+1)^{(5^{-1/2}-\mathcal{O}(1/n))T}
\end{equation}
different $\mathcal{P}$-itineraries. By taking the natural logarithm of \ref{hatvany}, dividing by $T$, and passing to the
limit as $T\to\infty$, one concludes that
\[
h_{top}(n)\ge (5^{-1/2}-\mathcal{O}(1/n))\log(2n+1).
\]
Finally, dividing by $\log(n)$, then passing to the limit inferior as $n\to\infty$, we ontain the lower bound of Theorem 4.1.

On the other hand, Theorem 2.1 shows that the orbit segments $S^{[0,T]}x$, with a given time span $T$, cannot form
more than
\[
(2n+1)^{2\sqrt{2}T}
\]
different $\mathcal{P}$-itineraries. Again, by taking natural logarithm, dividing by $T$, passing first to the limit
$T\to\infty$, then to the limit superior as $n\to\infty$, we obtain the upper bound of Theorem 4.1.
\end{proof}

\bigskip

\section{Concluding Remarks}

\bigskip

In this brief section we make a quick comparison between the topological entropy $h_{top}(n)$ of the flow
and its metric entropy $h_\mu (n)=h_\mu(n,r)$, where $\mu=\mu_n$ is the unique, absolutely continuous invariant measure of
the billiard flow. As it turns out, if the radius $r=r(n)$ of the disks is a sufficiently fast decreasing
function of $n$, than $h_\mu (n)$ is dramatically smaller than $h_{top}(n)$. Indeed, first fix the value of
$n$, and study the behaviour of the metreic entropy $h_\mu(n,r)$ of the flow as a function of $r$, when
$r\to 0$. As it follows from the Marklof-Strömbergsson theory of the distribution of the free path length
in a dilute Lorentz gas \cite{Ma-St(2010)}, the expected value of the free path length is $\mathcal{O}(1/r)$.
After a typical collision with a scatterer of radius $r$, an expanding (convex) local orthogonal manifold
collects a curvature of the order of $\mathcal{O}(1/r)$, and undergoes an expansion of the order
$\mathcal{O}(1/r^2)$ until the next collision. Thus the positive Lyapunov exponent will be of the
order $\mathcal{O}(-r\log(r))$, therefore $h_\mu(n,r)=\mathcal{O}(-r\log(r))$ for any fixed $n$, while
$r\to 0$. This immediately implies that, if $r=r(n)$ is a sufficiently fast decreasing function of $n$,
then the ratio $\dfrac{h_\mu(n)}{h_{top}(n)}$ can be made to converge to zero as fast as we wish,
as $n\to \infty$.

On the other hand, if $r=\mathcal{O}(1/n)$ and we consider the billiard map $T$ instead of the
flow $\{S^t\}$, then the metric entropy $h_\mu(n,T)$ will be of the order $\mathcal{O}(\log(n))$,
i.e. of the order of the topological entropy of the map. Indeed, the iterate $T^k$ produces asymptotically
the same amount of dilation of the local unstable manifolds (as $k\to \infty$) as the iterate
$S^{k/r}\approx S^{kn}$ of the flow so, after rescaling, we gain an extra multiplying factor $n$ in the
metric entropy, i.e. $h_\mu(n,T)=\mathcal{O}(-nr\log(r))=\mathcal{O}(\log(n))$.

\bigskip

\bibliographystyle{amsplain}

\end{document}